\documentclass[11pt]{amsart}
\usepackage{amsmath, amsthm, amsfonts, amssymb}
\usepackage{enumerate}
\usepackage[bookmarks=false]{hyperref}

\usepackage{xcolor}

\usepackage{verbatim}

\title{Conjugacy co-amenability}

\author{Mehrdad Kalantar}
\address{Mehrdad Kalantar\\University of Oxford, UK}
\email{mehrdad.kalantar@maths.ox.ac.uk}

\author{Srivatsav Kunnawalkam Elayavalli}
\address{Srivatsav Kunnawalkam Elayavalli\\University of Maryland, USA}
\email{sriva@umd.edu}

\newtheorem{thm}{Theorem}[section]
\newtheorem{prop}[thm]{Proposition}

\newtheorem{lem}[thm]{Lemma}
\theoremstyle{definition}
\newtheorem{defn}[thm]{Definition}
\newtheorem{defn/lem}[thm]{Definition/Lemma}

\newtheorem{examp}[thm]{Example}

\newcommand{\bA}{{\mathbb A}}
\newcommand{\bN}{{\mathbb N}}

\newcommand{\cX}{{\mathcal X}}

\newcommand{\eps}{\varepsilon}

\newcommand{\act}{\curvearrowright}

\newcommand{\ds}{{\sharp\kern-.5pt\sharp}}

\newcommand{\RN}[1]{%
  \textup{\uppercase\expandafter{\romannumeral#1}}%
}

\makeatletter
\DeclareRobustCommand\frownotimes{\mathbin{\mathpalette\frown@otimes\relax}}
\newcommand{\frown@otimes}[2]{%
  \vbox{
    \ialign{##\cr
      \hidewidth$\m@th#1{}_\frown$\kern-\scriptspace\hidewidth\cr
      \noalign{\nointerlineskip\kern-1pt}
      $\m@th#1\otimes$\cr
    }%
  }%
}
\makeatother

\begin{document}
\begin{abstract}
In this note we study a natural analytic property of inclusions of groups akin to co-amenability: the property of existence of a non-compactly supported invariant state for the conjugation action of a group $G$ on the von Neumann algebra generated by the characteristic functions $\{\mathbf{1}_{gHg^{-1}}\}_{g\in G}$ viewed inside $\ell^\infty(G)$.  Some interesting settings and examples of this phenomena are proved. We also comment on a consideration related to proper proximality, which motivated this property.  

\end{abstract}

\maketitle

\section{Introduction}

A group action on a set is said to be \emph{amenable} if it admits an invariant mean. The study of amenability of group actions is an important avenue in the modern field of analytic group theory. A countable group $G$ naturally acts on the coset space $G/H$ for any subgroup $H<G$. The amenability of this action is equivalent to the notion of \emph{co-amenability} of the subgroup $H<G$. This was first introduced by Eymard in \cite{Eymard}. The study of this notion turns out to be much more subtle than the study of amenability itself, permitting rather puzzling examples \cite{monodpopa}. 

We consider here a natural property of inclusions $H<G$ akin to co-amenability, which takes into account the structure of conjugates of the subgroup. We call this property \emph{conjugacy co-amenability}. In order to define it, first we must recall that the von Neumann algebra generated by a subset of $\ell^\infty(G)$: it is the closure in the strong operator topology of the adjoint-closed subalgebra generated by the subset (not necessarily having the identity element) inside $\ell^\infty(G)$. Also recall a state on a unital $C^*$-algebra is a positive linear functional that assigns the value of 1 to the identity operator. Suppose $H<G$, we define the von Neumann algebra $\mathcal{A}_H(G)$ as the von Neumann algebra generated by the family of projections $\mathbf{1}_{gHg^{-1}}$ where $g\in G$.  Note that $G$ acts naturally on $\mathcal{A}_H(G)$ by conjugation. We denote by $c_0(G)$ the ideal in $\ell^\infty(G)$ consisting of those functions $f\in \ell^\infty(G)$ such that for every $\epsilon>0$ there exists a finite subset $F\subset G$ such that $|f(g)|<\epsilon$ for every $g\notin F$. 

Now we define our notion below: 

\begin{defn}\label{Defn 1}
Let $H<G$ be an inclusion of countable groups. Say that $H<G$ is \emph{conjugacy co-amenable} if there exists a $G$-conjugation invariant state $\varphi$ on $\mathcal{A}_H(G)$ such that $\varphi(f)=0$ for all $f\in c_0(G)\cap \mathcal{A}_H(G)$. 
\end{defn}

The reason for asking the state to vanish on $c_0(G)$ is just a matter of convention, often one that is preferred to have from an operator algebraic point of view. Of course, not including this condition would yield a necessarily different notion. Naturally, whenever $H<G$ is a normal subgroup, the space $\mathcal{A}_H(G)$ is the atom at the identity element. Hence such $H<G$ is conjugacy co-amenable. In fact, more generally $H<G$ is conjugacy co-amenable whenever the normalizer of $H$ is co-amenable in $G$ (see Example \ref{normalizer example}). The converse however is not true. We provide various more interesting examples of non conjugacy co-amenability in Section \ref{non examples section}. 

Our motivating reason to define and study this notion was the problem of determining whether acylindrically hyperbolic groups are properly proximal (see \cite{BIP18, DKEP21}). An emblematic test case is amalgamated free products $G= G_1*_HG_2$, whenever $H$ is a weakly malnormal subgroup of $G$. In the case of almost malnormality, this was proved in \cite{DKE21} (see also \cite{DKE24}). In the effort to generalize the upgrading technique therein (see Lemma 3.3 of \cite{DKE21}) to the more general setting of weakly malnormal subgroups, the notion of conjugacy co-amenability naturally appeared. Interestingly, we were able to handle the part of the argument involving demonstrating non conjugacy co-amenability in these examples (see Example \ref{aylindrical example}). We however were not able to obtain a natural map to the bidual of the small at infinity compactification. Our approach is explained in Section \ref{pp section}.

\subsection*{Acknowledgments} This work was done during visits of the second author to the University of Houston in 2021. He thanks the first author, UH and its staff for their hospitality. We thank D. Osin and B. Sun for helpful exchanges concerning Example 2.12. We also thank C. Ding and F. Fournier-Facio for helpful exchanges. 
\\
The first author was supported by the NSF grant DMS-2155162; the second author is supported by the NSF grant DMS-2350049.
For the purpose of open access, the authors have applied a CC-BY license to any author accepted manuscript arising from this submission.

\section{Conjugacy co-amenability}

Let $G$ be a countable group. For $S\subset \ell^\infty(G)$ denote by $W^*(S)\subset \ell^\infty(G)$ the strong operator topology closure of the $*$-algebra generated by $S$. Note that $W^*(S)$ need not have the same identity element as $\ell^\infty(G)$.

If $H<G$ is a subgroup, then denote by $\mathcal{W}(H)$ the family of subgroups that are arbitrary intersections of conjugates of $H$. For every element $X\in \mathcal{W}(H)$, define $\widetilde{X}= X\setminus \bigcup_{Y\in \mathcal{W}(H), Y\subsetneq X} Y$.  

\begin{lem}\label{lemma1}
The family $\widetilde{\cX}:=\{\widetilde{X} : X\in \mathcal{W}(H)\}$ is pairwise disjoint, and $X = \bigcup_{Y\in \mathcal{W}(H), Y\subseteq X}\widetilde{Y}$ for every $X\in \mathcal{W}(H)$.
\begin{proof}
Assume $\widetilde{X_1}\cap \widetilde{X_2} \neq\emptyset$ for some $X_1, X_2\in \mathcal{W}(H)$, and let $g\in \widetilde{X_1}\cap \widetilde{X_2}$. Then $g\in {X_1}\cap {X_2}$, and so it follows ${X_1}\cap {X_2}= X_1$ and ${X_1}\cap {X_2}= X_2$. Thus $X_1=X_2$.

To show the second assertion, let $g\in X$, and let $K:= \bigcap_{Y\in \mathcal{W}(H),\, g\in Y\subseteq X}Y$. Then $K\in \mathcal{W}(H)$, and for every $Y\in \mathcal{W}(H)$ with $Y\subsetneq K$ we have $g\notin Y$. Hence $g\in \widetilde{K}$.
\end{proof}
\end{lem}

\begin{lem}
For every $g\in G$ and $X\in \mathcal{W}(H)$ we have $\widetilde{X^g} = \widetilde{X}^g$. In particular, $G$ acts on the set $\widetilde{\cX}$ by conjugations.
\begin{proof}
This follows immediately from the facts that the family $\mathcal{W}(H)$ is invariant under conjugations by elements in $G$, and that for every $X, Y\in \mathcal{W}(H)$ and $g\in G$ we have $Y\subsetneq X \iff Y^g\subsetneq X^g$.
\end{proof}
\end{lem} 

Observe that $\mathcal{A}_H(G)= W^*(\{\mathbf{1}_{\widetilde{X}}\}_{X\in \mathcal{W}(H)}) = \ell^\infty(\widetilde{\cX})$, and $\widetilde{\cX}$ decomposes into a countable disjoint union of transitive $G$-spaces. Thus, there is $\bA\subseteq \bN$, and subgroups $\{L_n\le G :n\in \bA\}$ such that $\widetilde{\cX} = \bigsqcup_{n\in \bA} G/L_n$. 

In particular, if $L_n$ is co-amenable in $G$ for some $n\in\bA$, then $H<G$ is conjugacy co-amenable in $G$.

We denote $G_{\widetilde{X}} = \{g\in G : g\widetilde{X} = \widetilde{X}\}$ for the stabilizer subgroup of $\widetilde{X}\in \widetilde{\cX}$, and we denote by $\mathcal{N}_G(X)$ the normalizer in $G$ of a subgroup $X\le G$.

\begin{lem}\label{lem:stab}
We have $G_{\widetilde{X}} = \mathcal{N}_G(X)$ for every $X\in \mathcal{W}(H)$ with $\widetilde{X}\neq\emptyset$.

In particular, if the normalizer $\mathcal{N}_G(H)$ is co-amenable in $G$, then $H$ is conjugacy co-amenable in $G$.
\begin{proof}
Let $X\in \mathcal{W}(H)$ and $g\in \mathcal{N}_G(X)$. Then $g\widetilde{X}g^{-1} \subset X$. If $g\widetilde{X}g^{-1} \cap Y \neq \emptyset$ for some $Y\in \mathcal{W}(H)$ with $Y\subsetneq X$, then $\widetilde{X} \cap g^{-1}Y g \neq \emptyset$, which implies $\widetilde{X} \subseteq g^{-1}Y g$, thus $X= g\widetilde{X}g^{-1} \subseteq Y\subsetneq X$, which is a contradiction. Hence, $g\widetilde{X}g^{-1} =\widetilde{X}$, and so $g\in G_{\widetilde{X}}$.

Conversely, assume $g\in G_{\widetilde{X}}$. Then $\widetilde{X}=g\widetilde{X}g^{-1} \subset X\cap g{X}g^{-1}$, which implies $X \subset g{X}g^{-1}$. Similarly, $X \subset g^{-1}{X}g$, hence $X = g{X}g^{-1}$, and so $g\in \mathcal{N}_G(X)$.
\end{proof}
\end{lem}

We will need the following immediate auxiliary lemma. A subset $I\subset K<G$ is said to be malnormal if $gxg^{-1}\notin K$ for all $x\in I, g\notin K$. 

\begin{lem}
    $X\in \mathcal{W}(H)$ contains an infinite malnormal subset $I\subset X$ iff $|\widetilde{X}|=\infty$. 
\end{lem}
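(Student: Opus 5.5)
The plan is to show that $\widetilde{X}$ is itself essentially the largest malnormal subset of $X$. The lemma then reduces to a cardinality comparison: if $\widetilde{X}$ is infinite, take $I=\widetilde{X}$; conversely, an infinite malnormal $I\subset X$ should lie in $\widetilde{X}$. One caveat has to be settled first. Elements $g\in\mathcal{N}_G(X)\setminus X$ always satisfy $gxg^{-1}\in X$, so the statement literally fails for normal infinite $H$: there $\widetilde{H}=H$ is infinite, but $H$ has no nonempty malnormal subset in the literal sense. I will therefore read the malnormality condition as quantified over $g\notin\mathcal{N}_G(X)$, or more precisely over those $g$ with $gXg^{-1}\neq X$. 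This matches the role of $\mathcal{N}_G(X)=G_{\widetilde{X}}$ in Lemma \ref{lem:stab}, and I would make the convention explicit at the start of the proof.

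\emph{($\Leftarrow$).} Assume $|\widetilde{X}|=\infty$ and set $I=\widetilde{X}$. Let $x\in I$ and $g$ with $gXg^{-1}\neq X$, and suppose $gxg^{-1}\in X$. Then $x\in X\cap g^{-1}Xg=:Y\in\mathcal{W}(H)$. Since $x\in\widetilde{X}$, Lemma \ref{lemma1} applied to $Y$ (via the disjointness of the $\widetilde{Y}$'s) forces $Y=X$, so $X\subseteq g^{-1}Xg$. To reach a contradiction I must upgrade this inclusion to equality. This is where the argument of Lemma \ref{lem:stab} enters: $g^{-1}\widetilde{X}g=\widetilde{X^{g^{-1}}}$, and $x$ lies in both $\widetilde{X}$ and $X^{g^{-1}}$. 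I would try to show $x\in\widetilde{X^{g^{-1}}}$ as well; disjointness then gives $X=X^{g^{-1}}$, the desired contradiction. If this step resists, the fallback is to restrict to $g$ with $gXg^{-1}\not\supseteq X$ and note that this still yields an infinite $I$.

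\emph{($\Rightarrow$).} Given an infinite malnormal $I\subset X$, I will show $I\subseteq\widetilde{X}$. Take $x\in I$ and suppose $x\in Y\subsetneq X$ with $Y\in\mathcal{W}(H)$. Write $Y=X\cap\bigcap_i H^{g_i}$. Some conjugate $H^{g_i}$ contains $x$ but not $X$, so $g_i^{-1}xg_i\in H$ with $g_i\notin\mathcal{N}_G(H)$. When $X=H$ this already contradicts malnormality of $I$.

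For general $X$, the main obstacle is to convert membership in $H^{g_i}$ into membership of some conjugate of $x$ in $X$ itself, via an element not normalizing $X$. My first attempt is to write $X=\bigcap_j H^{h_j}$ and consider $X^{g_ih_j^{-1}}$-type conjugates, hoping to find $g$ with $x\in X^g\neq X$. If that fails in general, I would prove the lemma in the form actually needed later, namely for $X$ equal to a conjugate of $H$.
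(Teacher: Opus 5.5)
The paper gives no argument for this lemma (it calls it immediate and never uses it). You are right that it is false as literally stated. However, the convention you adopt does not repair it. Your step of upgrading $X\subseteq g^{-1}Xg$ to equality cannot work, because a member of $\mathcal{W}(H)$ can be properly contained in its own conjugate.

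Take $BS(1,2)=\langle a,t\mid tat^{-1}=a^2\rangle\cong\mathbb{Z}[1/2]\rtimes\mathbb{Z}$ with $H=\langle a\rangle$. The conjugates of $H$ are the subgroups $2^n\mathbb{Z}$, $n\in\mathbb{Z}$, which form a chain, so $\widetilde H=H\setminus\langle a^2\rangle$ is infinite. Yet $tHt^{-1}=\langle a^2\rangle\neq H$ and $ta^mt^{-1}=a^{2m}\in H$. So $H$ has no nonempty malnormal subset even when $g$ ranges only over $G\setminus\mathcal{N}_G(H)$. Your intermediate goal $x\in\widetilde{X^{g^{-1}}}$ amounts to $gxg^{-1}\in\widetilde X$, and this fails here, since $a^2\notin\widetilde H$.

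Your fallback also has the inclusion reversed. What you derived is $gXg^{-1}\subseteq X$, so you must quantify over $g$ with $gXg^{-1}\not\subseteq X$. Under ``$gXg^{-1}\not\supseteq X$'' the element $t$ is still admissible and the argument fails. With the correct orientation, your $(\Leftarrow)$ argument is complete for every $X\in\mathcal{W}(H)$. Your $(\Rightarrow)$ argument for $X=H$ also survives, because you actually have $H\not\subseteq g_iHg_i^{-1}$, i.e.\ $g_i^{-1}Hg_i\not\subseteq H$.

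The general $(\Rightarrow)$ direction, which you leave open, is in fact false, even for the literal (strongest) notion of malnormality. Let $P=\mathbb{Z}\times V$ with $V=\{1,p,q,pq\}\cong(\mathbb{Z}/2)^2$. Take copies $P'\cong P$ and $L_j\cong\mathbb{Z}\times\langle j\rangle$ for $j\in\{p,q,pq\}$, and put $H=P*P'*L_p*L_q*L_{pq}$. Let $G$ be the HNN extension of $H$ with stable letters $s,t_j$, where $s$ conjugates $P'$ onto $P$ and $t_j$ conjugates $L_j$ onto $\mathbb{Z}\times\langle j\rangle\le P$.
\begin{itemize}
\item Then $X:=H\cap sHs^{-1}=P$. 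The three proper members $X\cap t_jHt_j^{-1}=\mathbb{Z}\times\langle j\rangle$ of $\mathcal{W}(H)$ cover $X$, so $\widetilde X=\emptyset$.
\item By malnormality of free factors, the fixed subtree in the Bass--Serre tree of any $1\neq x\in P$ consists of $H$, $sH$ and some $t_j$-neighbours of $H$. So its only $s$-edge is $[H,sH]$, whose stabilizer is $X$.
\item Now $gxg^{-1}\in X$ means that $x$ fixes the $s$-edge $g^{-1}[H,sH]$. This forces $g^{-1}[H,sH]=[H,sH]$, hence $g\in X$ (the action has no inversions).
\end{itemize}
Thus $P\setminus\{1\}$ is an infinite malnormal subset of $X$ in the literal sense, while $\widetilde X=\emptyset$.

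Restricting to conjugates of $H$ is therefore unavoidable, not a convenience, and there is no later use in the paper to justify it. What your two arguments actually prove is a corrected statement. Test malnormality against those $g$ with $gXg^{-1}\not\subseteq X$. Then $\widetilde X$ is malnormal for every $X\in\mathcal{W}(H)$, and for $X$ a conjugate of $H$ it is the largest malnormal subset, so the equivalence holds for such $X$.
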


Recall that an action $G\act S$ of a group $G$ on a set $S$ is said to be amenable if there is a $G$-invariant state on $\ell^\infty(S)$. It is standard fact that this is equivalent to existence of a net $(\zeta_j)\in \ell^2(S)$ such that $\lim_j\|g\zeta_j-\zeta_j\|_2 = 0$ for every $g\in G$.

\begin{prop}
Let $G$ be a group and $\{H_i :i\in I\}$ be a family of subgroups of $G$. Then the following are equivalent:
\begin{enumerate}
\item
The action $G\act \sqcup_{i\in I} G/H_i$ is amenable.
\item
For every finite set $F\subset G$ and every $\eps>0$, there exists $i\in I$ and $\xi_i\in \ell^2(G/H_i)$ such that $\|g\xi_i-\xi_i\|_2\le \eps\|\xi_i\|$ for every $g\in F$.
\end{enumerate}
\end{prop}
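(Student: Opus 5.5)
The plan is to go through the Reiter-type characterization recalled just before the statement: the action $G\act S$ is amenable iff there is a net of almost invariant vectors in $\ell^2(S)$. Here we take $S=\sqcup_{i\in I} G/H_i$. The one structural fact to use is the orthogonal decomposition $\ell^2(S)=\bigoplus_{i\in I}\ell^2(G/H_i)$. Each summand is $G$-invariant, since $G$ permutes each orbit $G/H_i$ separately.

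For (2)$\Rightarrow$(1), direct the pairs $(F,\eps)$ in the usual way. For each pair, (2) gives an index $i$ and a vector $\xi_i$. Normalize it and regard it as an element of $\ell^2(S)$ supported on $G/H_i$. This produces a net of unit vectors $\zeta_{(F,\eps)}$ with $\|g\zeta-\zeta\|_2\le\eps$ for all $g\in F$. Hence $\|g\zeta_j-\zeta_j\|_2\to0$ for every $g$, and the equivalence recalled above gives amenability of $G\act S$.

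For (1)$\Rightarrow$(2), I would start from an almost invariant unit vector $\zeta\in\ell^2(S)$ with $\|g\zeta-\zeta\|_2\le\delta$ for $g\in F$, where $\delta$ is to be chosen. Write $\zeta=\sum_i\zeta_i$ with $\zeta_i\in\ell^2(G/H_i)$. By $G$-invariance of the summands,
\[
\sum_{i}\|g\zeta_i-\zeta_i\|_2^2=\|g\zeta-\zeta\|_2^2\le\delta^2
\quad\text{and}\quad
\sum_i\|\zeta_i\|_2^2=1 .
\]
Summing over $g\in F$ gives
\[
\sum_i\sum_{g\in F}\|g\zeta_i-\zeta_i\|_2^2\le|F|\delta^2\sum_i\|\zeta_i\|_2^2 .
\]
So there is some $i$ with $\zeta_i\neq0$ and $\sum_{g\in F}\|g\zeta_i-\zeta_i\|^2\le|F|\delta^2\|\zeta_i\|^2$. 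Otherwise the reverse strict inequality would hold for every $i$ with $\zeta_i\ne0$, and summing would contradict the displayed bound; terms with $\zeta_i=0$ contribute $0$ on both sides. For that $i$, each $g\in F$ satisfies $\|g\zeta_i-\zeta_i\|\le|F|^{1/2}\delta\|\zeta_i\|$. Choosing $\delta=\eps|F|^{-1/2}$ gives (2) with $\xi_i=\zeta_i$.

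The only point needing care is this pigeonhole step: the index has to be chosen uniformly for all $g\in F$ at once, which is why I sum over $F$ before selecting $i$. Everything else is the standard state/Reiter equivalence, which the paper takes as known.
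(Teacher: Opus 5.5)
Your proof is correct and follows essentially the same route as the paper. Both use the orthogonal decomposition $\ell^2(\sqcup_i G/H_i)=\bigoplus_i \ell^2(G/H_i)$ into $G$-invariant summands and sum $\|g\eta_i-\eta_i\|_2^2$ over $g\in F$ before choosing an index, and (2)$\Rightarrow$(1) goes through the same embedding. The only cosmetic differences are that the paper argues (1)$\Rightarrow$(2) by contraposition while you run the pigeonhole directly, with a slightly sharper constant and explicit handling of the indices where $\zeta_i=0$.
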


\begin{proof}
Assume (2) does not hold. Then there exist a finite set $F\subset G$ and $\eps>0$ such that for every $i\in I$ and every $\xi_i\in \ell^2(G/H_i)$ we have $\|g\xi_i-\xi_i\|_2 > \eps\|\xi_i\|$ for some $g\in F$.
Given $\eta= (\eta_i)_{i\in I}\in \oplus_{i\in I} \ell^2(G/H_i) =  \ell^2(\sqcup_{i\in I} G/H_i)$ we have
\[\begin{split}
\sum_{g\in F}\|g\eta-\eta\|_2^2 &= \sum_{g\in F}\sum_{i\in I}\|g\eta_i-\eta_i\|_2^2 \\&= \sum_{i\in I}\sum_{g\in F}\|g\eta_i-\eta_i\|_2^2 \\&> \eps^2\,\sum_{i\in I}\|\eta_i\|^2 = \eps^2\,\|\eta\|^2 ,
\end{split}
\]
which implies that $\|g\eta-\eta\|_2 > \eps/|F|$ for some $g\in F$. This implies that the action $G\act \sqcup_{i\in I} G/H_i$ is not amenable.

Conversely, assume (2) holds. Then, considering the canonical embedding $\ell^2(G/H_{i_0}) \subset\oplus_{i\in I} \ell^2(G/H_i)$ for every $i_0\in I$, we conclude that for every finite set $F\subset G$ and every $\eps>0$, there exists $\xi_{F,\eps}\in \oplus_{i\in I} \ell^2(G/H_i) =  \ell^2(\sqcup_{i\in I} G/H_i)$ such that $\|g\xi_{F,\eps}-\xi_{F,\eps}\|_2\le \eps\|\xi_{F,\eps}\|$ for every $g\in F$. Obviously the net $(\xi_{F,\eps})\in \ell^2(\sqcup_{i\in I} G/H_i)$ satisfies $\|g\xi_{F,\eps}-\xi_{F,\eps}\|_2\to 0$ for every $g\in F$. It follows that the action $G\act \sqcup_{i\in I} G/H_i$ is amenable.
\end{proof}

\begin{lem}
Let $G$ be a group and $H\le G$. If $G$ has a non-amenable subgroup $K\le G$ such that $K\cap gHg^{-1} = \{e\}$ for all $g\in G$, then $H$ is not co-amenable in $G$.
\end{lem}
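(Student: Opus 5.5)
Argue by contradiction: suppose $H$ is co-amenable in $G$, so there is a $G$-invariant state $m$ on $\ell^\infty(G/H)$. Restrict the action $G\act G/H$ to $K$. The point is that $K$ then acts freely on $G/H$: if $k\in K$ fixes a coset $gH$, then $g^{-1}kg\in H$, i.e. $k\in gHg^{-1}\cap K=\{e\}$. Thus every $K$-orbit in $G/H$ is a copy of $K$ with its left translation action, and $G/H\cong\bigsqcup_{j\in J} K$ as $K$-sets, where $J$ is a set of representatives of the $K$-orbits.

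The restriction of $m$ to $\ell^\infty(G/H)$ is still $K$-invariant, so the action $K\act \bigsqcup_{j\in J}K$ is amenable. Now apply the preceding proposition with $K$ in place of $G$ and the family of subgroups $\{e\}$ indexed by $J$. Implication (1)$\Rightarrow$(2) gives the following: for every finite $F\subset K$ and $\eps>0$ there are some $j\in J$ and a vector $\xi\in\ell^2(K/\{e\})=\ell^2(K)$ with $\|k\xi-\xi\|_2\le\eps\|\xi\|$ for all $k\in F$. (Here "amenable action" is understood through its equivalent formulation by almost invariant vectors in $\ell^2$, which the paper recalls as standard.) All the summands are the same left regular $K$-space, so the index $j$ plays no role. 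We therefore get almost invariant unit vectors for the left regular representation of $K$ on $\ell^2(K)$, which is Reiter's/F\o lner's criterion for amenability of $K$. This contradicts the hypothesis that $K$ is non-amenable.

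Alternatively, one can avoid the proposition and build the mean directly. Choose a transversal $J$ for the $K$-orbits and identify $G/H\cong K\times J$ $K$-equivariantly. Then define $\mu(f)=m(\tilde f)$ for $f\in\ell^\infty(K)$, where $\tilde f(k,j)=f(k)$. This $\mu$ is a left invariant mean on $K$, which again shows $K$ is amenable. The only point needing care, and the main obstacle, is checking that the identification is genuinely $K$-equivariant. This holds because $K$ acts on the first coordinate by left translation, and that relies on the freeness established in the first step. Everything else is routine.
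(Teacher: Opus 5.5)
Your proof is correct. Your first route is essentially the paper's own argument, just packaged differently. The paper also uses that $K$ acts freely on $G/H$, via double coset representatives $G=\bigsqcup_{s\in S}KsH$. It splits $\eta\in\ell^2(G/H)$ into pieces $\eta_s\in\ell^2(K)$ and transfers the spectral gap of the left regular representation of $K$ to $\ell^2(G/H)$. You reach the same conclusion by citing the implication (1)$\Rightarrow$(2) of the preceding proposition for $K$ and the family $\{e\}_{j\in J}$, whose proof is exactly that direct-sum computation. Condition (2) there has to be read with $\xi\neq 0$, as you implicitly do when you pass to unit vectors.

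Your alternative route is genuinely different. It pushes the invariant mean forward along the $K$-equivariant projection $G/H\cong K\times J\to K$ and never passes to $\ell^2$.
\begin{itemize}
\item What it buys is economy: it uses only the definition of amenability via invariant means. It avoids the nontrivial implication ``invariant mean $\Rightarrow$ almost invariant $\ell^2$-vectors'' (Day/Namioka), which the paper needs to conclude non-co-amenability from the absence of almost invariant vectors in $\ell^2(G/H)$. It also avoids Reiter's criterion for $K$.
\item Like the paper's argument, it extends verbatim to the family version, Lemma~\ref{lem:ncca}, since $K$ still acts freely on $\bigsqcup_i G/H_i$.
\item The paper's $\ell^2$ route is instead quantitative. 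It gives a uniform lower bound $\max_{t\in F}\|t\eta-\eta\|_2\ge c\,\eps\|\eta\|_2$ on $\ell^2(G/H)$, with $c$ depending only on $|F|$, from the pair $(F,\eps)$ witnessing non-amenability of $K$. It also fits the $\ell^2$ framework of the surrounding proposition.
\end{itemize}
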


\begin{proof}
Since $K$ is non-amenable, there exists a finite set $F\subset K$ and $\eps>0$ such that $\max_{t\in F} \|t\xi - \xi\|_2 \ge \eps\|\xi\|$ for every $\xi\in\ell^2(K)$. 

Let $S\subset G$ be a complete set of representatives for the double coset space $K/G\backslash H$ (that is, $G = \sqcup_{s\in S} KsH$). Note that for $k, k'\in K$ and $s, s'\in S$, if $ksH = k's'H$ then $s=s'$ and $s^{-1}k^{-1}k's\in H$, which implies $k=k'$ since $K\cap sHs^{-1} = \{e\}$.

Given $\eta\in\ell^2(G/H)$, for every $s\in S$ we define $\eta_s\in\ell^2(K)$ by $\eta_s(k):= \eta(ks)$ for every $k\in K$. 
 Then
\[
\max_{t\in F} \|t\eta - \eta\|_2^2 = \max_{t\in F} \sum_{s\in S}\|t\eta_s - \eta_s\|_2^2 \ge \eps^2\sum_{s\in S}\|\eta_s\|_2^2 =\eps^2\|\eta\|_2^2 .
\]
This shows that $H$ is not co-amenable in $G$.
\end{proof}
It may worth noting that in the above proof we only used $K\cap sHs^{-1} = \{e\}$ for all $s\in S$. The same proof shows the following: 

\begin{lem}\label{lem:ncca}
Let $G$ be a group and $\{H_i :i\in I\}$ be a family of subgroups of $G$. If $G$ has a non-amenable subgroup $K\le G$ such that $K\cap gH_ig^{-1} = \{e\}$ for all $g\in G$, $i\in I$, then the action $G\act \sqcup_{i\in I} G/H_i$ is not amenable.
\end{lem}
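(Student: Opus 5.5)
The plan is to reduce the statement to the argument already given for a single subgroup. We work in the Hilbert space $\ell^2(\sqcup_{i\in I} G/H_i) = \oplus_{i\in I}\ell^2(G/H_i)$ and use the characterization of amenability of an action by almost invariant vectors in $\ell^2$ recalled before the preceding proposition. It then suffices to produce a finite set $F\subset G$ and $\eps>0$ such that $\max_{t\in F}\|t\eta-\eta\|_2\ge \eps'\|\eta\|_2$ for every $\eta$ in this direct sum, for some $\eps'>0$ depending only on $F$ and $\eps$.

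First, since $K$ is non-amenable, we fix a finite $F\subset K$ and $\eps>0$ with
\[
\sum_{t\in F}\|t\xi-\xi\|_2^2\ge \eps^2\|\xi\|_2^2 \quad\text{for all } \xi\in\ell^2(K).
\]
The sum form is more convenient than the max form, and it follows from the max form at no cost. Next, for each $i\in I$ we choose a set $S_i$ of representatives of the double cosets $K\backslash G/H_i$, so that $G=\sqcup_{s\in S_i}KsH_i$. Exactly as in the preceding lemma, the hypothesis $K\cap sH_is^{-1}=\{e\}$ shows that the map $K\to G/H_i$, $k\mapsto ksH_i$, is injective. Hence $G/H_i=\sqcup_{s\in S_i}KsH_i$ as a disjoint union of free $K$-orbits, each $K$-equivariantly identified with $K$. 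This gives a $K$-equivariant unitary identification
\[
\ell^2\Bigl(\bigsqcup_{i\in I} G/H_i\Bigr)\cong \bigoplus_{i\in I}\bigoplus_{s\in S_i}\ell^2(K),\qquad \eta\mapsto(\eta_{i,s}),\quad \eta_{i,s}(k)=\eta_i(ksH_i).
\]

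Summing the inequality over all pairs $(i,s)$ then gives
\[
\sum_{t\in F}\|t\eta-\eta\|_2^2=\sum_{i,s}\sum_{t\in F}\|t\eta_{i,s}-\eta_{i,s}\|_2^2\ge\eps^2\|\eta\|_2^2,
\]
so some $t\in F$ satisfies $\|t\eta-\eta\|_2\ge(\eps/\sqrt{|F|})\|\eta\|_2$, uniformly in $\eta$. This rules out almost invariant vectors, so the action is not amenable.

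The only real point to check is the bookkeeping: the restricted $K$-action on the disjoint union is free, so the whole $\ell^2$ space is a multiple of the left regular representation of $K$, and the non-amenability constant of $K$ passes through direct sums. I expect no genuine obstacle. The one care needed is to work with sums of squares over $F$, or equivalently use $\max\ge$ average, rather than interchanging a max with an infinite sum, since the latter would be unjustified.
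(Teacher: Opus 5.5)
Your proposal is correct and is essentially the paper's argument. The paper proves this lemma by rerunning the proof of the preceding single-subgroup lemma: decompose $G/H_i$ via double cosets $KsH_i$ into free $K$-orbits, then estimate over the resulting direct sum of copies of $\ell^2(K)$, which is what you do for each $i\in I$.

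Your use of $\sum_{t\in F}$ instead of $\max_{t\in F}$ is a real improvement in rigor. The paper's displayed step $\max_{t\in F}\sum_{s}\|t\eta_s-\eta_s\|_2^2\ge\eps^2\sum_s\|\eta_s\|_2^2$ interchanges max and sum in the unjustified direction. It only holds with $\eps^2/|F|$ in place of $\eps^2$, which is exactly the constant your argument produces.
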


We now obtain the following useful condition for verifying the negation of conjugacy co-amenability: 

\begin{prop}\label{useful negatation}
Let $H<G$, and assume that there exists a non-amenable subgroup $K<G$ such that $gkg^{-1} H (gkg^{-1})^{-1}\cap H= \{e\}$ for every $g\in G$ and $1\neq k\in K$. Then $H$ is not conjugacy co-amenable in $G$.
\end{prop}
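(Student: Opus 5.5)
The plan is to argue by contradiction. Suppose $\varphi$ is a $G$-invariant state on $\mathcal{A}_H(G)$ vanishing on $c_0(G)\cap\mathcal{A}_H(G)$. I will produce from $\varphi$ a $G$-invariant mean on a union of $G$-orbits in $\widetilde{\cX}$, and then show that this action on a union of orbits is not amenable, using Lemma \ref{lem:ncca} with the given subgroup $K$.

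\emph{Step 1 (discarding the trivial atom).} We have $\mathcal{A}_H(G)=\ell^\infty(\widetilde{\cX})$, and $\widetilde{\cX}$ consists of the nonempty sets $\widetilde{X}$. Let $\widetilde{\cX}_0\subseteq\widetilde{\cX}$ be the family of those nonempty $\widetilde{X}$ with $X\neq\{e\}$, and let $Y=\bigcup_{\widetilde{X}\in\widetilde{\cX}_0}\widetilde{X}\subseteq G$.
\begin{itemize}
\item $Y$ is conjugation invariant, because $\widetilde{X^g}=\widetilde{X}^g$ and $X^g=\{e\}$ iff $X=\{e\}$.
\item By Lemma \ref{lemma1}, the unit of $\mathcal{A}_H(G)$ is $\mathbf{1}_Y$, possibly plus $\mathbf{1}_{\{e\}}$ (this occurs exactly when $\{e\}\in\mathcal{W}(H)$, in which case $\widetilde{\{e\}}=\{e\}$).
\item Since $\mathbf{1}_{\{e\}}\in c_0(G)$, we get $\varphi(\mathbf{1}_Y)=1$.
\end{itemize}
Hence $\varphi$ restricts to a $G$-invariant state on $\ell^\infty(\widetilde{\cX}_0)$, so the action $G\act\widetilde{\cX}_0$ is amenable. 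By Lemma \ref{lem:stab}, $\widetilde{\cX}_0=\bigsqcup_n G/\mathcal{N}_G(X_n)$, where the $X_n$ range over orbit representatives with $X_n\neq\{e\}$.

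\emph{Step 2 (trivial intersections).} I will check that $K\cap g\mathcal{N}_G(X)g^{-1}=\{e\}$ for every $g\in G$ and every $X\in\mathcal{W}(H)$ with $X\neq\{e\}$. Suppose $1\neq k\in K$ and $n:=g^{-1}kg\in\mathcal{N}_G(X)$. Pick $a\in G$ with $X\subseteq aHa^{-1}$, which is possible since $X$ is an intersection of conjugates of $H$.
\begin{itemize}
\item Apply the hypothesis with the element $a^{-1}g^{-1}$ in place of $g$. This gives $(a^{-1}na)H(a^{-1}na)^{-1}\cap H=\{e\}$.
\item Conjugating by $a$ yields $n(aHa^{-1})n^{-1}\cap aHa^{-1}=\{e\}$.
\item Since $X=nXn^{-1}\subseteq n(aHa^{-1})n^{-1}$ and $X\subseteq aHa^{-1}$, we conclude $X=\{e\}$, a contradiction.
\end{itemize}

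\emph{Step 3 (conclusion).} By Lemma \ref{lem:ncca} applied to the family $\{\mathcal{N}_G(X_n)\}$ and the non-amenable subgroup $K$, the action $G\act\widetilde{\cX}_0$ is not amenable. This contradicts Step 1.

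The main point needing care is Step 1. One must correctly identify the only $c_0$ part of $\mathcal{A}_H(G)$ that is forced to carry zero mass, namely the possible atom $\{e\}$ coming from $X=\{e\}$. Other finite atoms $\widetilde{X}$ need not be treated separately, since Step 2 covers every $X\neq\{e\}$ regardless of the size of $\widetilde{X}$. Step 2 is a routine unwinding of the hypothesis, once the conjugating element is chosen to be $a^{-1}g^{-1}$.
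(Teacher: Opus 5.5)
Your proof is correct and follows the paper's route. The paper identifies the stabilizers of the atoms $\widetilde{X}$ with the normalizers $\mathcal{N}_G(X)$, uses $X\subseteq aHa^{-1}$ to show that $K$ meets every conjugate of these normalizers trivially, and concludes with Lemma \ref{lem:ncca}.

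Your Step 1 is a genuine improvement on the paper's one-line argument. Taking $g=e$ in the hypothesis gives $\{e\}=H\cap kHk^{-1}\in\mathcal{W}(H)$. Its atom $\{e\}$ is a $G$-fixed point with normalizer $G$, so the paper's claim that $gkg^{-1}\notin\mathcal{N}_G(X)$ for every $X\in\mathcal{W}(H)$ fails for $X=\{e\}$. Only the vanishing of $\varphi$ on $c_0(G)$, which you use to discard this atom, makes Lemma \ref{lem:ncca} applicable.
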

\begin{proof}
For every $X\in \mathcal{W}(H)$, we have $gkg^{-1}\notin \mathcal{N}_G(X)$ by the assumption and the fact that $X<g_0Hg_0^{-1}$  for some $g_0\in G$. Thus, the result follows from Lemma~\ref{lem:ncca}.  
\end{proof}

\subsection{Some (non)examples}\label{non examples section}

In this section we give several examples of (non) conjugacy co-amenable subgroups.

\begin{examp}
If $H<G$ is an infinite normal subgroup, then one always has $H<G$ is conjugacy co-amenable since $\mathcal{A}_H(G)$ is just the atom at the identity element. 
\end{examp}

\begin{examp} \label{normalizer example}
As noted in Lemma~\ref{lem:stab}, if $H<G$ is such that the normalizer $\mathcal{N}_G(H)$ is co-amenable in $G$, then $H$ is conjugacy co-amenable in $G$. The converse, however, is not true. Indeed, let $H<G$ be such that its normalizer $\mathcal{N}_G(H)$ is not co-amenable in $G$ and $N:=\bigcap_{g\in G}gHg^{-1}$ is infinite. Then $H$ is conjugacy co-amenable in $G$, since in this case $\widetilde{N}=N\in \widetilde{\mathcal{X}}$ is a $G$-fixed point, and so $\delta_N$ is a $G$-invariant state on $\mathcal{A}_H(G)$ which vanishes on $c_0(G)$ since $N$ is infinite.  
An example of such $H<G$ is
$G = F_2\times F_2$ and $H=\mathbb{Z}\times F_2$. 
\end{examp}

\begin{examp} Let $H<G$ be almost malnormal in $G$. In this case, suppose $H<G$ is conjugacy co-amenable then there is a $G$ invariant state on the quotient space $\mathcal{A}_H(G)/c_0(G)$. Almost malnormality says that $gHg^{-1}\cap H$ is finite for any $g\in G$, $g\notin H$. Hence the conjugacy action of $G$ on $\mathcal{A}_H(G)/c_0(G)$ is equivariantly identified with the coset space $\ell^\infty(G/H)$ with the left action of $G$. This would therefore imply that $H<G$ is co-amenable. Note that this is impossible from Lemma 3.3 of \cite{TDDuke}.
\end{examp}

\begin{examp} Assume that $H<G$ is finite height in $G$. By definition there is an $N>0$ such that $\cap_{i=1}^{N}H^{g_i}$ is finite for all distinct $N$-tuples $g_i\in G/H$. Define the  maps  $\pi_i:\ell^\infty(G/H)\to \mathcal{A}_H(G)/c_0(G)$ where $i=1,\cdots, N-1$ in the following way: $\pi_{N-1}(\delta_g)= \sum_{\{g_1,\hdots g_{N-1}| g_i\neq g\}}\prod_{i=1}^{N-1} \mathbf{1}_{H^{g_i}}$, and ${\pi_j(\delta_g)}= \sum_{g_1,\cdots, g_{j}| g_i\neq g}\prod_{i=1}^{j}\left(\mathbf{1}_{H^{g_i}}-\mathbf{1}_{H^{g_i}} (\bigvee_{k=j+1}^{N-1}\pi_{k}(1))\right)$. The maps then can be extended as follows: $\pi_j(\sum_{g\in G/H}\lambda_g \delta_g)= \bigvee_{g\in G/H} \lambda_g \pi_j(\delta_g)$. 

We claim that $\pi_i$ as defined above are all $G$-equivariant contractive injective $*$-homomorphisms. First observe that the projections  $$\prod_{i=1}^{j}\left(\mathbf{1}_{H^{g_i}}-\mathbf{1}_{H^{g_i}} (\bigvee_{k=j+1}^{N-1}\pi_{k}(1))\right)$$ from above are all pairwise orthogonal by construction for any $j$. Moreover, by construction $\bigvee_{j=1}^{N-1}\pi_j(1)=1$, the identity element of $\mathcal{A}_H(G)/c_0(G)$. Hence we see that $H<G$ is conjugacy co-amenable only if $H<G$ is co-amenable. 
\end{examp}

The relevance of property (T) groups $G$ in this context is obvious. By the spectral gap, a subgroup $H<G$ is conjugacy co-amenable if and only if for some $X\in \mathcal{W}(H)$ with $\tilde X \neq \emptyset$, its normalizer $\mathcal{N}_G(X)$ has finite index in $G$. This fact provides large classes of examples of non conjugacy co-amenable subgroups $H<G$.

\begin{examp}
Let $G$ be a property (T) group and let $H<G$.
\\
(i) Assume $G$ has trivial amenable radical and $H$ is amenable. Then $H$ is not conjugacy co-amenable in $G$. Indeed, all finite index subgroups of $G$ have trivial amenable radical, hence for every non-trivial $X\in \mathcal{W}(H)$, the normalizer $\mathcal{N}_G(X)$ has infinite index in $G$.
\\
(ii) Assume $H$ is self-normalizing and has infinite index in $G$, then $H$ is not conjugacy co-amenable in $G$. Examples of such $H<G$ include $G={\rm PSL}_n(\mathbb Z)$ for $n\ge 3$, and $H$ the standard copy of ${\rm PSL}_{n-1}(\mathbb Z)$.
\end{examp}

\begin{examp}
Let $G$ be a non-amenable group with the property that the normalizer of every non-trivial amenable subgroup of $G$ is also amenable. Then every non-trivial amenable subgroup $H<G$ is not conjugacy co-amenable in $G$.
\end{examp}

The examples of groups $G$ satisfying the condition of the last example include all torsion-free hyperbolic groups. In connection with the following example which generalizes the above case, the second author acknowledges helpful exchanges with D. Osin from some years ago, and B. Sun who kindly explained the following argument.

\begin{examp}\label{aylindrical example} Consider $H<G$ where $H$ is an arbitrary edge stabilizer of a torsion-free group $G$ acting non elementarily and edge-transitively on a tree admitting a loxodromic WPD-element in the sense of \cite{Bestvina_2002}. We claim that $H<G$ is not conjugacy co-amenable. Towards this it suffices to prove using Proposition \ref{useful negatation} that there exists a free group $F_2\leq G$ such that for all edges $e\in E(T)$ and all $1\neq f\in F_2$, we have $Stab(f\cdot e)\cap Stab(e)=1$.

Now suppose $a$ is the loxodromic WPD element, and let $a^{\pm \infty}$ denote the fixed points of $a$ on $\partial T$. Since we assumed that the action is non-elementary, there exists an element $g\in G$ that does not fix $\{a^{\pm \infty}\}$. Now fix $c=gag^{-1}$ and pick $x,y\in V(T)$ such  that $d(x,ax)\leq d(z,az)$ and $d(y,cy)\leq d(z,cz)$ for every $z\in T$. We can assume without loss of generality that there is some $k\in G$ such that $ky=x$. Setting $b=k^{-1}ck$ yields two loxodromic WPD elements $a,b$ whose  fixed point sets are disjoint in $\partial T$. By a standard argument, one can take large powers $m,n$ such that $a^m, b^n$ generate a free group $F_2$, and also satisfying the WPD condition, i.e, $Stab(x, a^{\pm m}x)= Stab(x, b^{\pm n}x)=1$. Set $\pi$ to be the projection of $T$ to the subtree consisting of all geodesics between points in the orbit  $F_2.x$.
Now pick any edge $e\in E(T)$, and $z\in e$ and by translation we may assume that $\pi(z)=x$. Suppose $f\in G_2$ and $t\in Stab(e)\cap Stab(fe)$. Since the action is by isometries, we have that the whole geodesic $[z,fz]$ has to be fixed. In particular, we have that $[\pi(z), \pi(fz)]=[x,fx]$ is fixed. But this contains a segment of the form $[x,a^{\pm m}x]$ or $[x, b^{\pm n}x]$ which contradicts the WPD assumption.

A very natural example of the above family of examples via Bass-Serre theory is the class of groups $G= G_1*_H G_2$ where $H$ is weakly malnormal, i.e, there exists an element $g\in G$ such that $gHg^{-1}\cap H$ is trivial. (See \cite{MO15} for the above result, and additional information about such groups.) Remarkably it is still open whether such groups are properly proximal in the sense of \cite{BIP18}. The following section outlines an approach to solve this problem.
\end{examp}

\section{A comment on proper proximality}\label{pp section}

Let $G$ be a countable group.  Let $K_i<G$, $i=1,\hdots, n$ be a finite family of subgroups of $G$. Denote by $c_0(G,\{K_i\}_{i=1}^{n})$ the set of functions $f\in \ell^\infty(G)$ such that for all $\epsilon>0$ there exists a finite set $F\subset G$ satisfying $|f(g)|<\epsilon$ for all $g\notin \bigcup_{i=1}^{n}FK_iF$. It is easy to check that this is always a closed 2-sided ideal in $\ell^\infty(G)$. Let $\mathbb{S}_{\{K_{i}\}_{i=1}^{n}}(G)=\left(\ell^{\infty}(G)/c_0(G,\{K_i\}_{i=1}^{n})\right)^{R_{G}}$ be the fixed points under the right action of $G$ on the relative Stone-Cech boundary $\ell^{\infty}(G)/c_0(G,\{K_i\}_{i=1}^{n})$. Recall the following  bidual version of the relative small boundary $\widetilde{\mathbb{S}}_{\{K_{i}\}_{i=1}^{n}}(G)= \left(\left(\ell^{\infty}(G)/c_0(G,\{K_i\}_{i=1}^{n})\right)^{**}\right)^{R_{G}}$. As in \cite{BIP18}, $G$ is properly proximal relative to a family of subgroups $\{K_i\}_{i=1}^{n}$ if there is no left $G$-invariant state on $\mathbb{S}_{\{K_{i}\}_{i=1}^{n}}(G)$. Equivalently by Theorem 4.3 (iv) of \cite{BIP18},   $G$ is properly proximal relative to a family of subgroups $\{K_i\}_{i=1}^{n}$ if and only if there is no left $G$-invariant state on $\widetilde{\mathbb{S}}_{\{K_{i}\}_{i=1}^{n}}(G)$.

Given a subgroup $H<G$, denote by $\iota$ the canonical injective contractive $*$-algebra homomorphism from $\ell^\infty(G)/c_0(G)$  into the Banach space bidual $\left(\ell^\infty(G)/c_0(G)\right)^{**}$. For any subset $X\subset G$, define $p_X= \sup_{g\in G} \iota (\mathbf{1}_{Xg})$, also called the right averaging projection of $X$. Define $P_X= \sup_{g,h\in G} \iota (\mathbf{1}_{gXh})$ also called the left and right averaging projection of $X$. The following is a  key lemma that is inspired by the work \cite{DKE21}.  

\begin{lem}\label{intersection}
For any two subgroups $X_1< G$, and $X_2< G$, we have that $p_{X_1}p_{X_2}= p_{X_1\cap X_2}$.       
\end{lem}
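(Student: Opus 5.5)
The plan is to compute $p_{X_1}p_{X_2}$ by pushing the product through both suprema and then identifying each pairwise product of cosets set-theoretically. Throughout, $M:=\left(\ell^\infty(G)/c_0(G)\right)^{**}$ is the enveloping von Neumann algebra of the commutative $C^*$-algebra $\ell^\infty(G)/c_0(G)$, so $M$ is a \emph{commutative} von Neumann algebra. The map $\iota$ is a $*$-homomorphism, so each $\iota(\mathbf{1}_{Xg})$ is a projection in $M$. All suprema are suprema of families of projections in $M$.

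\textbf{Step 1 (distributivity).} I will show that for families of projections $\{e_g\}$ and $\{f_h\}$ in $M$,
\[
\Bigl(\sup_g e_g\Bigr)\Bigl(\sup_h f_h\Bigr)=\sup_{g,h} e_g f_h .
\]
Since $M$ is commutative, the product of two projections is their meet. The supremum of a family of projections is the strong limit of the increasing net of finite joins, and for commuting projections the finite joins are $e\vee e' = e+e'-ee'$. Multiplication by a fixed projection $q$ is normal, i.e. strongly continuous on bounded sets. It is also a lattice homomorphism on commuting projections, because $(e\vee e')q = eq\vee e'q$. Hence $q\sup_g e_g=\sup_g e_g q$. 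Applying this once in each variable gives the displayed identity. This is the step I expect to be the main technical point, since it is where commutativity of the bidual and normality of multiplication are used. It is, however, standard.

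\textbf{Step 2 (coset intersections).} Since $\iota$ is multiplicative, $\iota(\mathbf{1}_{X_1g})\iota(\mathbf{1}_{X_2h})=\iota(\mathbf{1}_{X_1g\cap X_2h})$. I claim that $X_1g\cap X_2h$ is either empty or equal to $(X_1\cap X_2)k$ for some $k\in G$. Indeed, if $k\in X_1g\cap X_2h$, then $X_1g=X_1k$ and $X_2h=X_2k$, and therefore
\[
X_1g\cap X_2h=X_1k\cap X_2k=(X_1\cap X_2)k .
\]
Conversely, for every $k\in G$ the set $(X_1\cap X_2)k$ arises in this way by taking $g=h=k$.

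\textbf{Step 3 (conclusion).} Combining the two steps,
\[
p_{X_1}p_{X_2}=\sup_{g,h}\iota(\mathbf{1}_{X_1g\cap X_2h})=\sup_{k\in G}\iota(\mathbf{1}_{(X_1\cap X_2)k})=p_{X_1\cap X_2}.
\]
The middle equality holds because the two families of projections coincide up to adding the zero projection, which comes from the empty intersections and does not change the supremum.
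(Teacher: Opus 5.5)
Your proof is correct and takes essentially the same route as the paper: both rest on the observation that $X_1g\cap X_2h$ is either empty or a right coset $(X_1\cap X_2)t$. The paper instead writes the suprema as increasing nets of finite unions $X_1F\cap X_2F$ and proves the two inequalities separately, relying tacitly on the same normality of multiplication in the commutative bidual that your Step~1 makes explicit.
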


\begin{proof}
It is obvious from the definition that $p_{X_1\cap X_2}\le p_{X_1}p_{X_2}$.
Now, for any $g_1, g_2\in G$, we see that if $t\in X_1g_1\cap X_2g_2$, then $X_1g_1\cap X_2g_2= (X_1\cap X_2)t$.
Thus, given a finite set $F\subset G$, there is a finite set $F^\prime\subset G$ such that
$H_1F\cap H_2F= \bigcup_{g_1,g_2\in F} X_1g_1 \bigcup  X_2g_2= \bigcup_{t\in F'} (X_1\cap  X_2)t$, and therefore, $p_{X_1}p_{X_2}= \bigvee_{F\subset G} \iota(\mathbf{1}_{X_1F\cap X_2F})\leq p_{X_1\cap X_2}$.
\end{proof}

Let $\mathcal{A}_{H}^{1}(G)$ denote the $C^*$-algebra generated by $\{ \mathbf{1}_{gHg^{-1}}\}_{g\in G}\subset \ell^\infty(G)$.

\begin{lem}\label{pi extends}
For any pair of distinct elements $X,Y\in \mathcal{W}(H)$, we have $p_{\widetilde{X}}p_{\widetilde{Y}}=0$.
Consequently, the map $\mathbf{1}_{gHg^{-1}}\mapsto p_{gHg^{-1}}$ extends (uniquely) to an injective $*$-homomorphism $\pi: \mathcal{A}^{1}_H(G)/c_0(G)\to P_H\widetilde{\mathbb{S}}(G)P_H$.
\end{lem}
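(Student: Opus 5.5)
We argue in two stages: first the orthogonality of the projections $p_{\widetilde X}$, then the existence and injectivity of $\pi$.

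\textbf{Orthogonality.} Fix distinct $X,Y\in\mathcal{W}(H)$. Each projection $p_{\widetilde X}$ is the supremum of $\iota(\mathbf{1}_{\widetilde X g})$ over $g\in G$. So it suffices to show that for every finite $F\subset G$ the set $\widetilde X F\cap \widetilde Y F$ is finite, i.e.\ that $\iota(\mathbf{1}_{\widetilde X g_1})\iota(\mathbf{1}_{\widetilde Y g_2})=\iota(\mathbf{1}_{\widetilde Xg_1\cap\widetilde Yg_2})=0$ in $\ell^\infty(G)/c_0(G)$ for all $g_1,g_2$. Suppose $t\in Xg_1\cap Yg_2$. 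As in the proof of Lemma~\ref{intersection}, $Xg_1\cap Yg_2=(X\cap Y)t$. Now take $s=ut\in \widetilde Xg_1\cap\widetilde Yg_2$ with $u\in X\cap Y$. Write $Z=X\cap Y\in\mathcal W(H)$; since $X\neq Y$, $Z$ is a proper subgroup of $X$ or of $Y$, say $Z\subsetneq X$.

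\textbf{Main obstacle.} Naively, $s g_1^{-1}=u t g_1^{-1}\in\widetilde X$, and we want to force this element into a proper member of $\mathcal W(H)$ inside $X$. The difficulty is that $tg_1^{-1}$ need not lie in $Z$; it only lies in $X$. The trick is to compare two such points. Let $s,s'\in \widetilde Xg_1\cap \widetilde Yg_2$ with $s=ut$ and $s'=u't$. Then $s's^{-1}=u'u^{-1}\in Z$, while $s g_1^{-1},\,s'g_1^{-1}\in\widetilde X$. This alone does not give finiteness. Instead we use the coset structure: the map $s\mapsto sg_1^{-1}$ sends the intersection into $\widetilde X\cap Zh$ for $h=tg_1^{-1}\in X$. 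Similarly, with $h'=tg_2^{-1}\in Y$, the map $s\mapsto sg_2^{-1}$ sends the intersection into $\widetilde Y\cap Zh'$. If $h\in Z$, the first set is $\widetilde X\cap Z=\emptyset$ by definition of $\widetilde X$, since $Z\subsetneq X$. So the genuine case is $h\notin Z$, and there I expect one must use more than set-theoretic combinatorics.

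\textbf{Fallback if the intersection is infinite.} If the intersection can be infinite, I would take suprema over right translates first. One shows directly that $p_{\widetilde X}\le p_X-p_Z$ in the bidual, using Lemma~\ref{intersection} and the bound $p_{\widetilde X}\le p_X(1-p_Y)$ for every $Y\subsetneq X$. This reduction is what I expect to be the hardest step, since right translates of $Y$ inside $X$ can cover $\widetilde X$ modulo finite sets only in a weak sense. Assuming it, orthogonality follows by symmetry: $p_{\widetilde X}p_{\widetilde Y}\le p_X(1-p_Z)\,p_Y(1-p_Z)=p_Z(1-p_Z)=0$, where the middle equality uses $p_Xp_Y=p_Z$ from Lemma~\ref{intersection}. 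This handles both cases $Z\subsetneq X$ and $Z\subsetneq Y$ uniformly.

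\textbf{The homomorphism $\pi$.} Consider the $C^*$-algebra generated by $\{\mathbf 1_{X}\}_{X\in\mathcal W(H)}$ modulo $c_0(G)$. Define $\pi$ on products by
\[
\mathbf 1_{g_1Hg_1^{-1}}\cdots\mathbf 1_{g_nHg_n^{-1}}=\mathbf 1_{\bigcap g_iHg_i^{-1}}\mapsto p_{\bigcap g_iHg_i^{-1}}=\prod p_{g_iHg_i^{-1}},
\]
where the last equality is Lemma~\ref{intersection}; this makes $\pi$ multiplicative on the generating semigroup of projections. By Lemma~\ref{lemma1}, every $\mathbf 1_X$ decomposes as a disjoint sum of $\mathbf 1_{\widetilde Y}$ over $Y\subseteq X$. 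Together with the orthogonality of the $p_{\widetilde Y}$, this shows that $\pi$ is well defined on the finite-dimensional commutative subalgebras spanned by finitely many such projections, and is isometric there. Indeed, a projection $\mathbf 1_{\widetilde Y}$ is nonzero modulo $c_0$ exactly when $\widetilde Y$ is infinite, and then $p_{\widetilde Y}\neq0$. Isometry on a dense union of finite-dimensional subalgebras gives extension to the norm closure, injectivity, and uniqueness. Finally, each $p_{gHg^{-1}}$ is fixed by the right action (it is a right average), and it lies under $P_H$. Hence the range of $\pi$ lies in $P_H\widetilde{\mathbb S}(G)P_H$.
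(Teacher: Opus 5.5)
\textbf{The deferred inequality is false, and so is the lemma.} The step you defer and then assume is $p_{\widetilde X}\le p_X-p_Z$ for $Z=X\cap Y\subsetneq X$. Equivalently, $p_{\widetilde X}p_Z=0$, or $\widetilde X\cap Zh$ is finite for every $h\in X$; this is your ``genuine case $h\notin Z$''. It is not merely unproved: it is false in general, and so are both assertions of the lemma, so no further idea can close this gap.

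Take $G=\mathrm{BS}(1,2)=\langle a,t\mid tat^{-1}=a^2\rangle\cong\mathbb Z[1/2]\rtimes\mathbb Z$ and $H=\langle a\rangle$. The conjugates of $H$ are the subgroups $t^nHt^{-n}\leftrightarrow 2^n\mathbb Z$, so $\mathcal W(H)$ is this chain together with $\{e\}$. For $X=H$ and $Y=tHt^{-1}=\langle a^2\rangle$ one gets $\widetilde X=\langle a^2\rangle a$ and $\widetilde Y=\langle a^4\rangle a^2$. Hence $\widetilde Xa=\langle a^2\rangle\supseteq\widetilde Y$, and $p_{\widetilde X}p_{\widetilde Y}\ge\iota(\mathbf 1_{\widetilde Y})\neq 0$. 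Here $\widetilde X\cap Za=\widetilde X$ is infinite, which is exactly the case you could not handle.

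Injectivity fails too. The decomposition $H=\langle a^2\rangle\sqcup\langle a^2\rangle a$ gives $p_H=p_{tHt^{-1}}$. More generally $p_Z=p_X$ whenever $[X:Z]<\infty$: writing $X=\bigsqcup_i Zx_i$, each $\iota(\mathbf 1_{Xg})$ is a finite sum of projections $\iota(\mathbf 1_{Zx_ig})\le p_Z$. Meanwhile $\mathbf 1_H-\mathbf 1_{tHt^{-1}}=\mathbf 1_{\widetilde X}$ lies in $\mathcal A^1_H(G)$ but not in $c_0(G)$. So every $*$-homomorphism extending $\mathbf 1_{gHg^{-1}}\mapsto p_{gHg^{-1}}$ annihilates a nonzero element.

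\textbf{Relation to the paper, and two further repairs.} Your fallback is precisely the paper's route. The paper obtains $p_{\widetilde X}\le p_X-p_{X\cap Y}$ from $\mathbf 1_{\widetilde XF}\le\mathbf 1_{(X\setminus Z)F}=\mathbf 1_{XF}-\mathbf 1_{ZF}$, but that equality is wrong. For $x\in X\setminus Z$ and $F=\{e,x^{-1}\}$ we have $Z=Zxx^{-1}\subseteq(X\setminus Z)F\cap ZF$, which is infinite when $Z$ is. In fact $p_{X\setminus Z}\ge p_Z$ always, because $X\setminus Z\supseteq Zx$. So any proof along these lines needs an extra hypothesis on $H<G$ forcing $\widetilde X\cap Zh$ to be finite for all $Z\subsetneq X$ in $\mathcal W(H)$ and all $h\in X$.

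Even granting such a hypothesis, two steps of your write-up need repair:
\begin{enumerate}
\item The symmetric bound $p_{\widetilde Y}\le p_Y(1-p_Z)$ requires $Z\subsetneq Y$. It is false when $Y\subsetneq X$, since it would give $p_{\widetilde Y}=0$. Only the one-sided estimate $p_{\widetilde X}p_{\widetilde Y}\le(p_X-p_Z)p_Y=p_Z-p_Z=0$ is legitimate.
\item In the extension step, the sets $\widetilde Y$ are generally not in $\mathcal A^1_H(G)$. Also $A\mapsto p_A$ is not countably additive: singletons give $0$, while their union $G$ gives $1$. So the decomposition $X=\bigcup_{Y\subseteq X}\widetilde Y$ cannot be transported through $\pi$, and $\pi(\mathbf 1_{\widetilde Y})$ need not equal $p_{\widetilde Y}$. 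In the example above, any such $\pi$ has $\pi(\mathbf 1_{\widetilde X})=0$ while $p_{\widetilde X}=p_{\langle a^2\rangle}\neq 0$.
\end{enumerate}
The correct finite-level check is on the atoms $X_S\setminus\bigcup_{j\notin S}X_j$ of the algebra generated by finitely many $\mathbf 1_{X_j}$. Their images are $p_{X_S}-\bigvee_{j\notin S}p_{X_S\cap X_j}$, and injectivity requires these to be nonzero whenever the atom is infinite.
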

\begin{proof}
Let $X,Y\in \mathcal{W}(H)$ with $X\neq Y$. Then for any finite set $F\subset G$, we get  $\mathbf{1}_{\widetilde{X}F}\leq \mathbf{1}_{(X\setminus X\cap Y)F}=  \mathbf{1}_{XF}- \mathbf{1}_{(X\cap Y)F}$. Therefore, we see $p_{\widetilde{X}}\leq p_{X}- p_{X\cap Y}$.    Then by Lemma \ref{intersection} we have $p_{\widetilde{X}}p_{\widetilde{Y}}\leq (p_{X}-p_{X}p_{Y})(p_{Y})= 0$ as required.

It follows that the linear extension of the map $\mathbf{1}_{gHg^{-1}}\mapsto p_{gHg^{-1}}$ to the linear span of $\{\mathbf{1}_{gHg^{-1}}: g\in G\}$ is well-defined, isometric, and $*$-homomorphism, hence extends to a $C^*$-embedding $\pi: \mathcal{A}^{1}_H(G)/c_0(G)\to P_H\widetilde{\mathbb{S}}(G)P_H$.
It is obvious that $\pi$ is $G$-equivariant.
\end{proof}

Recall the von Neumann algebra $\mathcal{A}_H(G)= W^*(\{\mathbf{1}_{gHg^{-1}}\}_{g\in G})$. For notational simplicity, denote by $\mathbf{1}$ the identity element of $\mathcal{A}_H(G)$.  A natural way to construct a map $\pi: \mathcal{A}_H(G)\to
\left(\left(\ell^\infty(G)/c_0(G)\right)^{**}\right)^{R_G}$ such that $\pi(\mathbf{1})= P_{H}$ would be the following:

\begin{enumerate}
    \item $\pi(\mathbf{1}_{gHg^{-1}})= p_{g}$;
    \item For $X\in  \mathcal{W}(H)$, and $X=\cap_{i\in I_X} g_iHg_i^{-1}$, we define $\pi(\mathbf{1}_{X})=  \land_{i\in I} p_{g_i}$. 

    \item Define $\pi(\mathbf{1}_{\bigcup_{Y\in \mathcal{W}(H), Y\subsetneq X} Y})= \bigvee_{Y\in \mathcal{W}(H), Y\subsetneq X} p_{Y}$.
    \item For $\widetilde{X}= X\setminus \bigcup_{Y\in \mathcal{W}(H), Y\subsetneq X} Y$, define $\pi(\mathbf{1}_{\widetilde{X}})= \pi(\mathbf{1}_{X})-\pi(\mathbf{1}_{\bigcup_{Y\in \mathcal{W}(H), Y\subsetneq X} Y})$.
    \item $\pi(\bigsqcup_{\widetilde{X}\in S\subset \widetilde{\cX}} \mathbf{1}_{\widetilde{X}})= \bigvee \pi(\mathbf{1}_{\widetilde{X}})$. 
\end{enumerate}

We do have the following:
\begin{lem}
For every $X\in \mathcal{W}(H)$, we have $\bigvee_{F\subset G} \iota(\mathbf 1_{\widetilde{X}F})\leq \pi(1_{\widetilde{X}})$.
\end{lem}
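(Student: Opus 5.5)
Since $\pi(\mathbf 1_{\widetilde X})$ is defined in step (4) as the difference $\pi(\mathbf 1_X)-\bigvee_{Y\in\mathcal W(H),\,Y\subsetneq X}p_Y$, and the second term lies below $\pi(\mathbf 1_X)$, this is a projection. The inequality reduces to two facts about a fixed finite $F\subset G$: the projection $q_F:=\iota(\mathbf 1_{\widetilde X F})$ satisfies (a) $q_F\le \pi(\mathbf 1_X)$, and (b) $q_F$ is orthogonal to $p_Y$ for every $Y\in\mathcal W(H)$ with $Y\subsetneq X$. Together these give $q_F\le \pi(\mathbf 1_X)-\bigvee_{Y\subsetneq X}p_Y$, since a projection under $\pi(\mathbf 1_X)$ orthogonal to each $p_Y$ is orthogonal to their supremum. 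Taking the supremum over all finite $F$ then yields the lemma.

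For (a), write $X=\bigcap_{i\in I_X} g_iHg_i^{-1}$, so that $\pi(\mathbf 1_X)=\bigwedge_i p_{g_iHg_i^{-1}}$, interpreting $p_{g}$ in step (1) as $p_{gHg^{-1}}$. For each $i$ we have $\widetilde XF\subseteq XF\subseteq g_iHg_i^{-1}F=\bigcup_{f\in F} g_iHg_i^{-1}f$. Hence $\iota(\mathbf 1_{\widetilde XF})\le \bigvee_{f\in F}\iota(\mathbf 1_{g_iHg_i^{-1}f})\le p_{g_iHg_i^{-1}}$, and therefore $q_F$ lies below the meet. I would also check that $\iota$ preserves finite suprema of commuting projections; this holds because $\iota$ is a $*$-homomorphism and, for commuting projections, $\iota(\mathbf 1_{A\cup B})=\iota(\mathbf 1_A)+\iota(\mathbf 1_B)-\iota(\mathbf 1_A)\iota(\mathbf 1_B)$.

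For (b), which I expect to be the main point, I would argue as in the proof of Lemma~\ref{pi extends}. First, $\mathbf 1_{\widetilde XF}\le \mathbf 1_{(X\setminus Y)F}$. Next, $p_Y$ is the supremum of $\iota(\mathbf 1_{Yg})$ over $g\in G$, so it suffices to show $\iota(\mathbf 1_{\widetilde XF})\iota(\mathbf 1_{Yg})=0$, i.e.\ that $\widetilde XF\cap Yg$ is finite for each $g$. Since $\widetilde X\cap Y=\emptyset$, this reduces to showing that $Xf\cap Yg$ is either empty or a single coset $(X\cap Y)t=Yt$ by the computation in Lemma~\ref{intersection}, and such a coset lies inside $Yf$. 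On the other hand, $\widetilde Xf\cap Yf=(\widetilde X\cap Y)f=\emptyset$. Consequently $\widetilde Xf\cap Yg=\emptyset$ for every $f\in F$ and $g\in G$, so the product is in fact exactly zero even before passing to the quotient. The subtle step is the identity $Xf\cap Yg\subseteq Yf$ when $Y\subseteq X$: if $t=xf=yg$ then $Xf=Xt$ and $Yg=Yt$, so $Xf\cap Yg=Yt$, and $t=xf$ with $x\in X$, so I need $Yt\subseteq Yf$, i.e.\ $x\in Y$. This can fail, since only $Yt\subseteq Xf$ holds. I would repair this by instead using $p_Y\le \pi(\mathbf 1_Y)$ and $p_{\widetilde X}\le p_X-p_{X\cap Y}$ as in Lemma~\ref{pi extends}. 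From $\mathbf 1_{\widetilde XF}\le \mathbf 1_{XF}-\mathbf 1_{(X\cap Y)F}$ I get $q_F\le p_X-p_Y$, since $X\cap Y=Y$. Then $q_F\,p_Y\le (p_X-p_Y)p_Y=0$ by Lemma~\ref{intersection}, which gives $p_Xp_Y=p_Y$. This orthogonality argument is the one I would ultimately commit to.
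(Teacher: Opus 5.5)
Your reduction to (a) and (b), and your proof of (a), are fine. The argument breaks at (b), and the repair you commit to does not work.

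The inequality $\mathbf 1_{\widetilde XF}\le \mathbf 1_{XF}-\mathbf 1_{(X\cap Y)F}$ says, modulo finite sets, that $\widetilde XF\cap YF=\emptyset$. But for $x_0\in X\setminus Y$ the whole coset $Yx_0$ lies in $X\setminus Y$. So for $F=\{e,x_0\}$ the set $\widetilde XF\cap YF$ contains $\widetilde X\cap Yx_0$, which can be infinite (see below).

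The next step fails even when that inequality holds, for example for $F=\{e\}$. Since $Yx_0\subseteq X\setminus Y$ and $\iota(\mathbf 1_{Yx_0})\le p_Y$, one has $\iota(\mathbf 1_X)-\iota(\mathbf 1_Y)\not\le p_X-p_Y$ as soon as $Y$ is infinite. A supremum over $F$ of differences is not the difference of the suprema.

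The obstruction you found in your first attempt is the real one: an infinite coset $Yt\subseteq Xf$ not contained in $Yf$. The repair only hides it. Your final argument is essentially the paper's own proof, which uses the same inequality from Lemma~\ref{pi extends} and the same interchange of $\bigvee_F$ with a difference, so it has the same defect.

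No argument can close this gap, because (b), and the lemma itself, are false. Let $G=\mathbb Z\times F(b,t)$ and $H=\mathbb Z\times\langle b\rangle$. Since $\langle b\rangle$ is malnormal in $F(b,t)$, $\mathcal W(H)$ consists of the conjugates $\mathbb Z\times w\langle b\rangle w^{-1}$ together with $N=\mathbb Z\times\{e\}$. Hence $\widetilde H=H\setminus N$ and $\pi(\mathbf 1_{\widetilde H})=p_H-p_N$.

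Now $Nb\subseteq\widetilde H$, so already for $F=\{e\}$ we get $0\neq\iota(\mathbf 1_{Nb})\le\iota(\mathbf 1_{\widetilde H})$. On the other hand, $\iota(\mathbf 1_{Nb})\le p_N\le p_H$, so $\iota(\mathbf 1_{Nb})$ is orthogonal to $p_H-p_N$. Therefore $\iota(\mathbf 1_{\widetilde H})\not\le\pi(\mathbf 1_{\widetilde H})$.

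Since $\widetilde N=N$, the same computation gives $p_{\widetilde H}p_{\widetilde N}\ge\iota(\mathbf 1_{Nb})\neq 0$. This contradicts the orthogonality asserted in Lemma~\ref{pi extends} as well.

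The underlying problem is that $p_Y$ absorbs every right coset of $Y$, while $\widetilde X$ generally contains right cosets $Yx$ with $x\in X\setminus Y$. Any valid version would need a different definition of $\pi(\mathbf 1_{\widetilde X})$ or a different left-hand side.
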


\begin{proof}
Fix $X= \cap_{i=1}^\infty g_iHg_i^{-1}$ where $g_i\in G$. For every finite subset $F\subset G$ and every $Y\in \mathcal{W}(H)$ that is a finite intersection of conjugates of $H$, and every $n\in \mathbb{N}$, we have  $$\iota(\mathbf 1_{\widetilde{X}F})\leq   \iota(\mathbf 1_{(\cap_{i\leq n}g_iHg_i^{-1}\setminus Y )F})\leq \iota(\mathbf 1_{(\cap_{i\leq n}g_iHg_i^{-1})F}) - \iota(\mathbf 1_{(Y\cap (\cap_{i\leq n}g_iHg_i^{-1}))F}) .$$ Applying Lemma \ref{intersection}, and continuing the above chain of inequalities we see that 
\[
\begin{split}
\iota(\mathbf 1_{\widetilde{X}F})&\leq \bigvee_{F\subset G}\left(  \iota(\mathbf 1_{(\cap_{i\leq n}g_iHg_i^{-1})F}) - \iota(\mathbf 1_{(Y\cap (\cap_{i\leq n}g_iHg_i^{-1}))F})\right)
\\&= \land_{i\leq n}p_{g_i} - (\land_{i\leq n}p_{g_i})p_{Y}.
\end{split}
\]
Hence, taking inf over $n$ and sup over $Y$, we get  $\iota(\mathbf 1_{\widetilde{X}F})\leq \pi(\widetilde{X})$, and the result follows.   
\end{proof}

From the construction of $\pi$ it is immediate that the projection $\pi(\mathbf{1}_{\widetilde{X}})$ is orthogonal to $\pi(\mathbf{1}_{\widetilde{Y}})$ for every $\widetilde{Y}\neq \widetilde{X}$. The main issue that kills this approach is the problem of verifying that $\pi(1)=P_H$. If such a map $\pi$ existed and it is proven to be positive, then one can prove proper proximality for new families of groups, notably including amalgamated free products $G*_AH$ where $A$ is weakly malnormal  (see Example \ref{aylindrical example}). Indeed, recall that
if $G$ is properly proximal relative to $H$, and $G$ is not properly proximal, then there exists a left $G$ invariant state $\phi$ on $\widetilde{\mathbb{S}}(G)$ such that $\phi(P_H)=1$. Moreover, assume $G$ is properly proximal relative to $H$, and $H$ is not conjugacy co-amenable in $G$. Then $G$ is properly proximal. Indeed, assume that $G$ is not properly proximal by contradiction. There exists a left $G$ invariant state $\phi$ on $\widetilde{\mathbb{S}}(G)$ such that $\phi(P_H)=1$.   Then by the above, we see that  $\phi\circ \pi$  is a $G$-conjugation invariant state on $\mathcal{A}_H(G)$ as required, which contradicts the non conjugacy co-amenability of $H$.

\bibliographystyle{amsalpha}
\bibliography{ref}

\end{document}